\newtheorem{corollary}{Corollary}
\newtheorem{lemma}{Lemma}
\newtheorem{theorem}{Theorem}
\newtheorem{proposition}{Proposition}
\newcommand{\qed}{\mbox{$\Diamond$}\vspace{\baselineskip}}
\newenvironment{proof}{\noindent{\bf Proof:}}{\qed}
\newtheorem{example}{Example}
\begin{document}
\author{Mikl\'os B\'ona\\
        Department of Mathematics\\
University of Florida\\
Gainesville FL 32611-8105\\
USA  }

\title{On Two Related Questions of Wilf Concerning Standard Young Tableaux}

\maketitle

\begin{abstract}
We consider two questions of Wilf related to Standard Young Tableaux.
We provide a partial answer to one question, and that will lead us to 
 a more general answer to the other question.
Our answers are purely combinatorial.
\end{abstract}

\section{Introduction} In 1992, in his paper \cite{wilf}, Herb Wilf has
proved the following interesting result.

\begin{theorem} \label{wilf} (Wilf, \cite{wilf}.) Let $u_k(n)$ be the number
of permutations of length $n$ that contain no increasing subsequence of
length $k+1$, and let $y_k(n)$ be the number of Standard Young Tableaux
on $n$ boxes that have no rows longer than $k$. Then for all even positive
integers $k$, the equality
\begin{equation}
\label{wilfeq} {2n\choose n} u_k(n) = 
\sum_{r=0}^{2n} {2n\choose r} (-1)^r y_k(r)y_k(2n-r)
\end{equation} holds.
\end{theorem}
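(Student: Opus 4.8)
The plan is to push both sides through the Robinson--Schensted correspondence and thereby reduce Theorem~\ref{wilf} to a single signed identity about Standard Young Tableaux, which I then intend to prove by a sign-reversing involution. First I would rewrite the left-hand side. By Schensted's theorem, a permutation has no increasing subsequence of length $k+1$ precisely when the common shape $\lambda$ of its insertion and recording tableaux satisfies $\lambda_1\le k$, i.e.\ has no row longer than $k$. Hence $u_k(n)=\sum_{\lambda} f_\lambda^2$ and $y_k(m)=\sum_{\lambda} f_\lambda$, where $f_\lambda$ is the number of SYT of shape $\lambda$ and both sums range over partitions with first part at most $k$ (of $n$ and of $m$ respectively). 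Crucially, \emph{the same} family of shapes $\{\lambda:\lambda_1\le k\}$ governs both quantities.

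Next I would recognize the right-hand side as a coefficient in a product of exponential generating functions. Writing $Y_k(x)=\sum_{m\ge0}y_k(m)x^m/m!$, and using $(-1)^{2n-r}=(-1)^r$, one gets $(2n)!\,[x^{2n}]\,Y_k(x)Y_k(-x)=\sum_{r}\binom{2n}{r}(-1)^r y_k(r)y_k(2n-r)$, while the left-hand side equals $(2n)!\cdot u_k(n)/(n!)^2$. Expanding $y_k$ into $f$'s, the theorem is therefore equivalent to
\[ \sum_{\substack{\mu,\nu:\ \mu_1,\nu_1\le k\\ |\mu|+|\nu|=2n}} (-1)^{|\mu|}\binom{2n}{|\mu|}f_\mu f_\nu \;=\; \binom{2n}{n}\sum_{\substack{\lambda:\ \lambda_1\le k\\ |\lambda|=n}} f_\lambda^2. \]
Here $\binom{2n}{|\mu|}f_\mu f_\nu$ counts ordered pairs $(T_1,T_2)$ of SYT of bounded width living on complementary subsets of $[2n]$, carrying the weight $(-1)^{|T_1|}$, whereas $\binom{2n}{n}f_\lambda^2$ counts an $n$-subset of $[2n]$ together with a pair of equal-shape tableaux on $n$ boxes.

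The heart of the proof is then a sign-reversing involution on the weighted pairs $(T_1,T_2)$ whose surviving fixed points account, with total sign $+1$, for the right-hand side. Before designing it I would run the case $k=2$ in full, where $y_2(m)=\binom{m}{\lfloor m/2\rfloor}$ and $u_2(n)=C_n$, to read off the cancellation pattern; a key warning it delivers is that the fixed points are \emph{not} simply the balanced pairs $|T_1|=|T_2|=n$ (those carry sign $(-1)^n$, which would be wrong for odd $n$), so the involution must move mass between differently sized pairs. A naive move that shuffles the largest label between $T_1$ and $T_2$ is not well defined, since reinserting it into the other tableau does not specify a corner; I therefore expect to need a jeu-de-taquin or growth-diagram mechanism to make the box transfer canonical and genuinely self-inverse.

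The main obstacle, and the step I expect to be hardest, is constructing this involution explicitly and pinning down exactly where the evenness of $k$ enters: my expectation is that the unmatched $+$ and $-$ configurations get paired by a complementation inside a width-$k$ strip, a symmetry that is available only when $k$ is even. That parity is essential can be seen from $k=1$, where $y_1\equiv1$ collapses the right-hand side to $(1-1)^{2n}=0\ne\binom{2n}{n}$, so any correct argument must truly use it. As an alternative lens on the same identity, one may phrase the target via the exponential specialization $\mathrm{ex}(s_\lambda)=f_\lambda\,t^{|\lambda|}/|\lambda|!$ applied to a Cauchy-type identity for Schur functions summed over $\{\lambda:\lambda_1\le k\}$; proving that bounded Cauchy identity combinatorially would yield the theorem as well, and should again rest on the self-complementary structure present only for even $k$.
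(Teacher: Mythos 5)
Your proposal stops exactly where the proof would have to begin. The translation via Robinson--Schensted ($u_k(n)=\sum f_\lambda^2$, $y_k(m)=\sum f_\lambda$ over shapes with $\lambda_1\le k$) and the rewriting of the right-hand side as a signed count of pairs of bounded-width tableaux on complementary subsets of $[2n]$ are correct but routine; the entire content of the theorem lies in the sign-reversing involution that you describe only by its desired properties (``I therefore expect to need a jeu-de-taquin or growth-diagram mechanism,'' ``the step I expect to be hardest''). No such map is defined, its fixed-point set is not identified, and the place where evenness of $k$ enters is left as a conjecture about ``complementation inside a width-$k$ strip.'' As it stands this is a research plan, not a proof, and the theorem remains unproved by it.

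For comparison: the paper does not claim a combinatorial proof of Theorem \ref{wilf} in general either. It proves only the special case $k=2n$, where the width constraint is vacuous, by passing to pairs $(p,q)$ of involutions on complementary subsets and moving the largest fixed point of $p\cup q$ to the other involution; the survivors are pairs of fixed-point-free involutions, which are then matched with the $\binom{2n}{n}\,n!$ objects on the left by a second explicit bijection. Your instinct that the survivors are not simply the balanced pairs is vindicated there. But the paper also records precisely the obstruction your plan would hit for general even $k$: restricting the fixed-point-moving involution to bounded-width objects fails because inserting a fixed point can lengthen the longest increasing subsequence, and the resulting ``survivor identity'' is simply false for bounded $k$ (for $k=2$, $n=3$ one gets $100$ on one side and $110$ on the other). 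So any correct involution on your weighted pairs must have a fixed-point set of a genuinely different kind, and constructing it is the open combinatorial problem, not a finishing detail. (The paper instead turns to odd $k$, where the fixed-point-moving map does preserve the bound on \emph{decreasing} subsequences, via Beissinger's theorem and Lemma \ref{fixedpoint}.)
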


Wilf's proof of Theorem \ref{wilf} was not elementary; it used modified 
Bessel functions and computed the determinant of a Toeplitz matrix. Therefore,
Wilf asked the following two intriguing questions.

\begin{enumerate}
\item Is there a purely combinatorial proof for Theorem \ref{wilf} ?
\item What statement corresponds to Theorem \ref{wilf}  for odd $k$?
\end{enumerate}

In this paper, we answer Question 1 in a special case, which then will lead
us to a more general answer to Question 2. This answer will be a formula
that will still contain a summation sign, but each summand will be 
non-negative, which will explain why the answer is always non-negative.
The number of nonzero summands will be half of what it is in (\ref{wilfeq}),
 and the summands will be significantly smaller than in (\ref{wilfeq}).

 We point out that in another
special case, that of $k=2$, a simple and elegant bijective proof has 
recently been given by Rebecca Smith and Micah Coleman \cite{smith}. 

We will assume familiarity with the Robinson-Schensted correspondence
between permutations of length $n$ and pairs of Standard Young Tableaux
on $n$ boxes and of the same shape. In particular, we will need the
following facts. 

\begin{enumerate}
\item There is a one-to-one correspondence $RS$ between involutions on an 
$n$-element set and Standard Young Tableaux
on $n$ boxes.
\item The length of the longest increasing subsequence of the involution
$v$ is equal
to the length of the first row of $RS(v)$, and
\item the length of the longest decreasing subsequence of the involution
$v$ is equal
to the length of the first column of $RS(v)$.
\end{enumerate}

Readers who want to deepen their knowledge of the Robinson-Schensted
correspondence should consult the book \cite{sagan} of Bruce Sagan.
The Robinson-Schensted
correspondence makes Theorem \ref{wilf} even more intriguing, since both
sides of (\ref{wilfeq}) can be interpreted in terms of Standard Young
Tableaux as well as in terms of permutations. 

In Section 3, we will also need the following, somewhat less well-known
result of Janet Simpson Beissinger, which is implicit in an earlier 
paper of Marcel-Paul Sch\"utzenberger \cite{schutz}.

\begin{theorem} \cite{beissinger} \label{beissinger}
Let $v$ be an involution, and let $RS(v)$ be its image under the 
Robinson-Schensted correspondence. Then the number of fixed points
of $v$ is equal to the number of odd columns of $RS(v)$.
\end{theorem}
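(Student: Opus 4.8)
The plan is to prove the statement by induction on $n$, using the fact recorded as the first listed property of $RS$ --- that for an involution $v$ the two tableaux produced by the correspondence coincide, so $RSK(v)=(P,P)$ with $P=RS(v)$ --- together with the standard row-insertion and deletion operations of the Robinson--Schensted correspondence. The inductive step removes the cycle of $v$ containing the largest letter $n$, and splits into two cases according to whether $n$ is a fixed point of $v$ or lies in a $2$-cycle. Throughout, ``odd column'' means a column of $RS(v)$ whose length is odd, and I write $\mathrm{oc}(T)$ for the number of odd columns of a tableau $T$ and $\mathrm{fp}(v)$ for the number of fixed points of $v$.

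In the first case $v(n)=n$, so in one-line notation the value $n$ sits in the last position. Let $v'$ be the restriction of $v$ to $[n-1]$; this is again an involution, with $\mathrm{fp}(v')=\mathrm{fp}(v)-1$. Since $n$ is the largest letter and is inserted last, its row-insertion appends a single new box to the end of the first row without bumping, so $RS(v)$ is obtained from $RS(v')$ by adding one box at position $(1,\lambda_1)$, where $\lambda_1$ is the new first-row length. Because appending the largest letter forces $\lambda_1>\lambda_2$, this box occupies a column of length exactly $1$; hence $\mathrm{oc}(RS(v))=\mathrm{oc}(RS(v'))+1$. Combining this with the inductive hypothesis $\mathrm{oc}(RS(v'))=\mathrm{fp}(v')$ gives $\mathrm{oc}(RS(v))=\mathrm{fp}(v)$, as desired.

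The second case, where $n$ belongs to a $2$-cycle $(m,n)$ with $m<n$, is the heart of the argument and the step I expect to be the main obstacle. Here I would delete the whole cycle: let $v''$ be the involution obtained by removing both $m$ and $n$ and relabeling the remaining $n-2$ letters in an order-preserving way. Then $\mathrm{fp}(v'')=\mathrm{fp}(v)$, so by the inductive hypothesis it suffices to prove the purely tableau-theoretic claim that adjoining the $2$-cycle leaves the number of odd columns unchanged, i.e.\ $\mathrm{oc}(RS(v))=\mathrm{oc}(RS(v''))$. The mechanism to analyze is that, in the RSK computation, the larger letter $n$ first enters the first row and is later bumped downward when the smaller letter $m$ is inserted; following this bumping path shows that passing from $RS(v'')$ to $RS(v)$ adds exactly two boxes, which land either as a vertical domino inside a single column (so that column's length changes by $2$ and its parity is preserved) or in two horizontally adjacent columns in such a way that one column becomes odd while the other becomes even. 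In every configuration the net change in the number of odd columns is zero, which is exactly what is needed. The delicate point, and the part requiring genuine care rather than routine calculation, is to verify that these are the only two possibilities for how the two new boxes can be created; I would establish this by a careful analysis of the insertion path, or, perhaps more transparently, by passing to Fomin's growth diagram for the permutation matrix of $v$, which is symmetric because $v=v^{-1}$, and reading off the partition sequence along its main diagonal, where the symmetry of the diagram makes the parity bookkeeping for $2$-cycles and fixed points uniform.
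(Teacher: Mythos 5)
The paper offers no proof of this theorem: it is imported verbatim from Beissinger's article \cite{beissinger} and used as a black box, so there is no internal argument to compare yours against; I can only judge the proposal on its own terms. Your fixed-point case is correct and complete: since $v(n)=n$, the largest letter is inserted last, lands at the end of the first row in a brand-new column of length $1$, and both $\mathrm{fp}$ and the number of odd columns increase by one. The two-cycle case, however, contains a genuine gap, and it is exactly the step you flag as ``requiring genuine care'': the dichotomy you assert for where the two new boxes can land is false. Take $v=1243=(1)(2)(3\,4)$ and delete the cycle $(3,4)$, so that $v''$ is the identity on a two-element set and $RS(v'')$ has shape $(2)$, while $RS(v)$ has shape $(3,1)$. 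The two new cells sit at $(2,1)$ and $(1,3)$: they form neither a vertical domino nor a pair in horizontally adjacent columns. The count of odd columns is still preserved in this example, but only because columns $1$ and $3$ happened to have lengths of opposite parity ($1$ and $0$) before the cells were added.

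What your induction actually requires is the statement that whenever the two new cells fall in distinct columns $c_1<c_2$, the prior lengths of those two columns have opposite parity (in particular, the two cells never form a horizontal domino, which would change the number of odd columns by $\pm 2$). That claim is the entire content of the hard case, and the proposal does not establish it; the ``careful analysis of the insertion path'' and the appeal to Fomin's symmetric growth diagrams are pointers to where a proof might be found, not a proof. A smaller but real omission is the preliminary assertion that the shape of $RS(v'')$ sits inside that of $RS(v)$ with exactly two cells added; this does hold (delete the largest letter $n$ from the word of $v$, which removes the corner cell containing $n$, and then un-insert the final letter $m$), but it needs saying. Until the opposite-parity claim for the two-cell step is proved, the induction does not close.
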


\section{When $k=2n$}
In this section, we bijectively prove Theorem \ref{wilf} in the special
case when $k=2n$. It is clear that in that special case, the requirement
on the increasing subsequences on the left-hand side
of (\ref{wilfeq}), and the requirement on the length of rows on
the right-hand side of (\ref{wilfeq})  are automatically satisfied.
 Therefore,
if $y(m)$ denotes the number of involutions of an $m$-element set, then
 Theorem
\ref{wilf} simplifies to the following proposition.

\begin{proposition} \label{nolimits}
For all positive integers $n$, we have
\begin{equation}
\label{easyone} 
{2n\choose n} n! = \sum_{r=0}^{2n} {2n\choose r} (-1)^ry(r)y(2n-r).
\end{equation}
\end{proposition}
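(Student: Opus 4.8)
The plan is to read the right-hand side of (\ref{easyone}) as a signed enumeration of a single family of combinatorial objects, and then to kill almost all of them with a sign-reversing involution, leaving exactly the objects counted by the left-hand side. First I would expand the summand: $\binom{2n}{r}y(r)y(2n-r)$ counts the triples $(S,\alpha,\beta)$ in which $S\subseteq[2n]$ has $r$ elements, $\alpha$ is an involution of $S$, and $\beta$ is an involution of the complement $\bar S=[2n]\setminus S$. Gluing $\alpha$ and $\beta$ together produces an involution $\tau$ of $[2n]$ equipped with a two-coloring of its cycles: color a fixed point or a transposition \emph{red} if it lies in $S$ and \emph{blue} otherwise. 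This map is readily seen to be a bijection, so the whole sum becomes $\sum_{(\tau,c)}(-1)^{|S|}$, ranging over all involutions $\tau$ of $[2n]$ carrying a red/blue coloring $c$ of their cycles, where $S$ is the set of red elements.

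The key observation to record next is that the sign $(-1)^{|S|}$ depends only on the red fixed points: since $|S|$ is the sum of the sizes of the red cycles and every transposition contributes an even amount, we have $(-1)^{|S|}=(-1)^{p}$, where $p$ is the number of red fixed points. This immediately suggests the sign-reversing involution $\Phi$: if $\tau$ has at least one fixed point, locate the fixed point carrying the smallest label and toggle its color. I would check that $\Phi$ is an involution, that it changes $p$ by exactly $1$ and hence reverses the sign, and that it never fixes an object on which it acts (the coloring genuinely changes). Consequently every object possessing at least one fixed point cancels against its partner, and only the fixed-point-free objects survive.

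Finally I would identify and count the survivors. A fixed-point-free involution of $[2n]$ is a perfect matching, and for such an object $p=0$, so its sign is $+1$; the coloring of its $n$ transpositions is now completely unconstrained. Hence the survivors are perfect matchings of $[2n]$ with each of the $n$ edges colored red or blue, all contributing $+1$, and there are $(2n-1)!!\cdot 2^{n}=\frac{(2n)!}{2^{n}n!}\cdot 2^{n}=\frac{(2n)!}{n!}=\binom{2n}{n}\,n!$ of them. To make the argument genuinely bijective rather than a bare count, I would give a direct bijection from these colored matchings to the objects enumerated by the left-hand side: reading $\binom{2n}{n}n!$ as the choice of an $n$-subset $L$ of $[2n]$ together with a bijection $L\to[2n]\setminus L$ amounts to a perfect matching whose edges are oriented, and one then matches ``edge colors'' to ``edge orientations'' one edge at a time.

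The main obstacle I anticipate is not any single computation but getting the bookkeeping of the model exactly right: verifying that the gluing map $(S,\alpha,\beta)\mapsto(\tau,c)$ is a clean bijection, and above all confirming that the transpositions contribute trivially to the sign, so that $\Phi$ really is both sign-reversing and involutive on the entire set of non-survivors. Once that reduction to the parity of the red fixed points is in hand, the remaining steps are routine.
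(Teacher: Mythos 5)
Your proposal is correct and follows essentially the same route as the paper: your colored involutions $(\tau,c)$ are just a repackaging of the paper's pairs $(p,q)$ of involutions on complementary sets, your sign-reversing involution (toggle the color of the extremal fixed point) is the paper's map $f$ up to choosing the minimal rather than the maximal fixed point, and the survivors are identified in both cases as bicolored fixed-point-free involutions of $[2n]$. The only cosmetic difference is in the final step, where you read $\binom{2n}{n}n!$ as an oriented perfect matching and convert orientations to colors, while the paper pairs the chosen word positionally with the increasingly sorted complement; both yield the same bijection in substance.
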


\begin{proof}
Let $[i]$ denote the set $\{1,2,\cdots ,i\}$.
 Let $A_n$ be the set of all permutations of the elements
of  $n$-element subsets of $[2n]$. Then the left-hand side of 
(\ref{easyone}) is equal to $|A_n|$.

Let $B_n$ be the set of ordered pairs $(p,q)$, where $p$ is an involution
on a subset $s_p$ of $[2n]$, and $q$ is an involution on the
set $[2n]-s_p$, the complement of $s_p$ in $[2n]$. 
Then the right-hand side of (\ref{easyone}) counts the elements of $B_n$
taking the parity of $r$ into account. More precisely, the 
 right-hand side of (\ref{easyone}) is equal to the number of 
elements of $B_n$ in which $|s_p|$ has an even size minus
the number of 
elements of $B_n$ in which $|s_p|$ has an odd size.

Now we are going to define an involution $f$ on a subset of $B_n$. Let
$(p,q)\in B_n$. As $p$ is an involution, all cycles of $p$ are of length one
(these are also called fixed points) or length two.  Let 
$F(p,q)$ be the set of all fixed points of $p$ and of all fixed points of $q$.
Let $M(p,q)$
 be the maximal element of $F(p,q)$ as long as $F(p,q)$ is a non-empty set.
 Now {\em move} $M(p,q)$
 to the {\em other}
involution in $(p,q)$. That is, if $M(p,q)$ was a fixed point of $p$, then 
move $M(p,q)$ to $q$, and if $M(p,q)$ was a fixed point of $q$, then move
 $M$ to $p$.
Call the resulting pair of involutions $f(p,q)=(p',q')$. 

\begin{example} Let $n=4$,  let $p=(31)(62)(5)$, and let $q=(7)(84)$.
Then $F(p,q)=\{5,7\}$, so $M(p,q)=7$, and therefore, $f(p,q)=(p',q')$, where
$p'=(31)(5)(62)(7)$ and $q'=(84)$.
\end{example}

It is clear that $f(p',q')=(p,q)$, since $F(p,q)=F(p',q')$, and
so $M(p,q)=M(p',q')$. So applying $f$ a second time simply moves
$M(p,q)$ back to its original place.

As the number of elements in $p$ and in $p'$ differs by exactly one, 
these two numbers are of different parity, and so the total contribution of
$(p,q)$ and $f(p,q)$ to the right-hand side of (\ref{easyone}) is 0.
Therefore, the only pairs $(p,q)$ whose contribution is not canceled
by the contribution of 
$f(p,q)$ are the pairs for which $f(p,q)$ is {\em not defined}, that is,
pairs $(p,q)$ in which both $p$ and $q$ are {\em fixed point-free 
involutions}. 

Noting that fixed point-free involutions are necessarily of even length, 
this shows that (\ref{easyone}) will be proved if we can show
that
\begin{equation} \label{fffree}
{2n\choose n} n! = \sum_{r=0}^{2n} {2n\choose r} x(r)x(2n-r),
\end{equation}
where $x(r)$ is the number of {\em fixed point-free} involutions
of length $r$.

This equality is straightforward to prove computationally, using the
fact that $x(2t)=(2t-1)\cdot (2t-3)\cdot \cdots \cdot 1=(2t-1)!!$ and
$x(2t+1)=0$. However,
for the sake of combinatorial purity, we provide a bijective proof. 
 
The left-hand side counts the ways to choose $n$ elements 
$a_1,a_2,\cdots ,a_n$ of $[2n]$ and then to arrange them in a line. 
Let $a\in A_n$ denote such an choice and arrangement. 
Now let $i_1<i_2<\cdots <i_n$ be the elements of $[2n]$ that we did {\em not}
choose, listed increasingly. Take the fixed point-free involution
whose cycles are the 2-cycles $(i_j,a_j)$, for $1\leq j\leq n$. Color
the cycles in which $i_j<a_j$ red, and the cycles in which $i_j>a_j$ blue.
Call the obtained fixed point-free permutation with bicolored cycles
$g(a)$.

It is then clear that $g$ maps into the set $D_n$ 
of fixed-point free permutations
on $[2n]$ whose cycles are colored red or blue. The right-hand side of 
(\ref{fffree}) counts precisely such involutions. Finally, it is 
straightforward to see that $g:A_n\rightarrow D_n$ is a bijection as
it has an inverse. (Just choose the smaller entry in each of the red cycles
and the larger entry in each of the blue cycles to recover $i_1,i_2,\cdots
,i_n$.) This completes the proof of (\ref{fffree}), and therefore, 
of Proposition \ref{nolimits}.
\end{proof}

\section{When $k$ is odd}

If we want to find a combinatorial proof of Theorem \ref{wilf} along the
line of the proof of Proposition \ref{nolimits}, we encounter several
difficulties. First, inserting a new fixed point into a partial permutation
can increase the length of its longest increasing subsequence, taking
it thereby out of the set that is being counted. More importantly, 
equality (\ref{fffree}) no longer holds if we replace $n!$ by $u_k(n)$ on
its left-hand side, and $x(h)$ by the number of fixed point-free involutions
with no increasing subsequences longer than $k$ on its right-hand side. 
Indeed, for $k=2$ and $n=3$, the left-hand side would be
${2n\choose n} u_2(3)=20\cdot 5=100$, while the right-hand side would
be $10+15\cdot 3+15\cdot 3 +10=110$.

It is  surprising that for the case of odd $k$, 
fixed points, and fixed point-free involutions, turn out to be relevant 
again. We point out that we will be considering
involutions without long {\em decreasing} rather than increasing subsequences.

Note that   $y_k(r)$ is equal to both the number of
 {\em involutions} on an $r$-element set
 with no {\em increasing} subsequences longer 
than $k$, and the number    {\em involutions} on an $r$-element set with no
 {\em decreasing subsequences} longer than $k$ (just take conjugates of 
the corresponding Standard Young Tableaux). However, this symmetry is
broken if we restrict our attention to {\em fixed point-free involutions},
since the conjugate of a tableaux with even columns only may have odd columns,
and our claim follows from Theorem \ref{beissinger}.

Let $x_k(r)$ be the number of {\em fixed point-free} involutions of
length $r$ with no {\em decreasing subsequences} with more than $k$ elements.
Note that $x_k(r)=0$ if $r$ is odd. 

\begin{theorem} \label{oddk}
For all positive integers $n$, and for all {\em odd} positive
integers $k$ the equality
\begin{equation} \label{tobeproved}
\sum_{r=0}^{2n} {2n\choose r} x_k(r)x_k(2n-r) = \sum_{r=0}^{2n}
 (-1)^r {2n\choose r} 
y_{k}(r)y_{k}(2n-r)\end{equation}
holds.
\end{theorem}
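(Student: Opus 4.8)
The plan is to mimic the sign-reversing involution from the proof of Proposition \ref{nolimits}, but now operating on pairs of involutions that satisfy the decreasing-subsequence constraint. The right-hand side of (\ref{tobeproved}) counts, with sign $(-1)^r$, ordered pairs $(p,q)$ where $p$ is an involution on a subset $s_p$ of $[2n]$ with no decreasing subsequence longer than $k$, and $q$ is such an involution on the complement $[2n]-s_p$ (recall $y_k$ counts involutions with no long decreasing subsequences as well, by conjugation). So I would let $B_n^{(k)}$ denote the set of such constrained pairs, with the sign determined by the parity of $|s_p|$, and attempt to cancel most of them in matched pairs of opposite sign.

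First I would define the move $f$ exactly as before: locate $F(p,q)$, the set of all fixed points of $p$ together with all fixed points of $q$, take its maximal element $M(p,q)$, and move that fixed point to the other involution of the pair. The crucial point to verify is that this move respects the decreasing-subsequence constraint, so that $f$ maps $B_n^{(k)}$ into itself. This is where the choice of \emph{decreasing} rather than increasing subsequences, and the choice of the \emph{maximal} fixed point, both become essential: moving the largest fixed point $M(p,q)$ into the other involution cannot create a decreasing subsequence longer than one ending at $M(p,q)$, since $M(p,q)$ exceeds every other entry and a new fixed point sitting at the top can only extend a decreasing run as its first (largest) element. I would argue that because $M(p,q)$ is the global maximum of $F(p,q)$, adding it as a fixed point to the other tableau increases the longest decreasing subsequence by at most the contribution of a lone large fixed point, and more carefully that the constrained status is preserved in both directions, so that $f$ is a well-defined involution on the subset of $B_n^{(k)}$ where $F(p,q)\neq\emptyset$.

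Granting that $f$ is a sign-reversing involution on the pairs with at least one fixed point, those pairs cancel in (\ref{tobeproved}), exactly as in the proof of Proposition \ref{nolimits}. The surviving terms are the fixed points of the non-involution, namely the pairs $(p,q)$ in which both $p$ and $q$ are fixed-point-free involutions with no decreasing subsequence longer than $k$. Since fixed-point-free involutions have even length, the sign $(-1)^r$ on these surviving terms is always $+1$, and the right-hand side of (\ref{tobeproved}) collapses to $\sum_r \binom{2n}{r} x_k(r)x_k(2n-r)$, which is precisely the left-hand side. This matches the structure of the earlier argument: the alternating sum reduces to an unsigned sum supported on fixed-point-free pairs.

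The main obstacle I expect is verifying that $f$ genuinely preserves the decreasing-subsequence bound in \emph{both} directions, i.e.\ that neither moving $M(p,q)$ out of an involution nor inserting it into the other can violate the constraint. Removing a fixed point can only shorten decreasing subsequences, so that direction is safe; the delicate direction is insertion. Here I would use that $M(p,q)$ is the maximum of \emph{all} fixed points across both involutions, so in the recipient involution it is larger than any of that involution's own fixed points, and I must check it cannot combine with the two-cycle entries to build a forbidden decreasing chain. I suspect the cleanest route is to reason directly on the cycle structure: a fixed point at value $M$ contributes a decreasing subsequence of length at most $1$ plus whatever lies entirely below it, and because $M$ is maximal among fixed points while the constraint already held before the move, the bound $k$ is maintained. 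Pinning down this inequality rigorously, rather than hand-waving, is the heart of the proof, and it is exactly the step where the asymmetry between rows and columns (hence Theorem \ref{beissinger} and the fixed-point/odd-column correspondence) is being exploited.
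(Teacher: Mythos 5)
Your overall architecture --- restricting the sign-reversing involution $f$ from Proposition \ref{nolimits} to the pairs satisfying the decreasing-subsequence bound, and identifying the survivors as the fixed-point-free pairs, which carry sign $+1$ --- is exactly the paper's strategy. But the one step you yourself flag as ``the heart of the proof'' is precisely where your proposed justification fails, and the reason you give for it is false. You argue that inserting $M(p,q)$ into the other involution is safe because $M(p,q)$ ``exceeds every other entry'' and so ``can only extend a decreasing run as its first (largest) element.'' But $M(p,q)$ is only the maximum of the \emph{fixed points} of $p$ and $q$; the $2$-cycles of the recipient involution can involve entries larger than $M(p,q)$. In the paper's own example, inserting the fixed point $7$ into $q=(84)$ produces the involution $(7)(84)$ on $\{4,7,8\}$, whose one-line word is $8,7,4$: the inserted fixed point sits in the \emph{middle} of a decreasing subsequence, and the longest decreasing subsequence grows from $2$ to $3$. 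So insertion genuinely can lengthen decreasing subsequences, and maximality of $M(p,q)$ among fixed points does not by itself prevent this.

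What actually closes the gap --- and what is missing from your proposal --- is a parity lemma that uses the oddness of $k$ through Theorem \ref{beissinger}: if an involution's longest decreasing subsequence has odd length, then \emph{every} longest decreasing subsequence of it contains a fixed point. (The paper proves this by induction on the number of fixed points; the base case is that a fixed-point-free involution corresponds to a tableau with no odd columns, so its first column, hence its longest decreasing subsequence, has even length.) Granting the lemma, the only dangerous case is when the recipient $q$ already has a decreasing subsequence of the maximal allowed length $k=2m+1$; any such subsequence contains a fixed point of $q$, and the newly inserted fixed point $M$ cannot extend it, because two fixed points always form an increasing pattern and so cannot lie in a common decreasing subsequence. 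Without this lemma your cancellation scheme is correct in outline but unproved at the decisive point; note that the identical outline is false for even $k$, so any correct completion must invoke the parity of $k$ somewhere, as this lemma does.
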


\begin{proof} Recall from the proof of Proposition \ref{nolimits} that
$B_n$ is the set of ordered pairs $(p,q)$, where $p$ is an involution
on a subset $s_p$ of $[2n]$, and $q$ is an involution on the
set $[2n]-s_p$, the complement of $s_p$ in $[2n]$.

Let $B(n,k,r)$ be the subset of $B_n$ consisting of 
pairs $(p,q)$ so that neither $p$ nor $q$ has
a decreasing subsequence longer than $k$. Note that here $p$ is an involution
of length $r$ and $q$ is an involution of length $2n-r$. 
It is then clear that 
\[ |B(n,k,r)| = {2n\choose r}  y_{k}(r)y_{k}(2n-r) .\]
Let $B(n,k)=\cup_r B(n,k,r)$.

Recall the involution $f$ from the proof of Proposition \ref{nolimits}, 
(the involution that took the largest fixed point present in $p\cup q$ and
moved it to the other involution), 
and let $f_{n,k}$ be the restriction of $f$ to the set $B(n,k)$.

Our theorem will be proved if we can show that $f_{n,k}$ maps into 
$B(n,k)$. Indeed, that would show that the only pairs $(p,q)\in B(n,k)$
whose contribution to the right-hand side of (\ref{tobeproved}) is not
canceled by the contribution of $f_{n,k}(p,q)$ are the pairs for which
$f(p,q)$ is not defined. It follows from the definition of $f_{n,k}$ that 
these are the pairs in which both $p$ and $q$ are fixed point-free 
involutions.

Our main tool is the following lemma.

\begin{lemma} \label{fixedpoint}
Let $w$ be an involution whose longest decreasing subsequence is of length
$2m+1$. Then each longest decreasing subsequence of $w$ must contain a fixed
point. 
\end{lemma}

\begin{proof} We use induction on $z$, the number of fixed points of $w$. 
If $z=0$, then the statement is vacuously true, since by Theorem 
\ref{beissinger} the
Standard Young Tableau corresponding to $w$ has no odd columns, so 
the length of its first column (and so, the length of the longest decreasing
subsequence of $w$) cannot be odd. 

Otherwise, assume that we know that the statement holds for $z-1$.
Also assume that $w$ has $z>0$ fixed points, and $w$ has a longest
decreasing subsequence $s$ of length $2m+1$ that does not contain any
fixed points. Remove a fixed point from $w$ to get $w'$. Then $w'$ still
has  a longest
decreasing subsequence $s$ of length $2m+1$ that contains no fixed points,
 even though $w'$ has only
$z-1$ fixed points, contradicting our induction hypothesis. 
\end{proof}

Let $(p,q)\in B(n,k)$. In order to show that  $f_{n,k}$ maps into 
$B(n,k)$, we need to show that $f_{n,k}(p,q)=f(p,q)=(p',q')$ has no decreasing
subsequence longer than $k$. The action of $f$ on $(p,q)$ consists of taking
a fixed point of one of $p$ and $q$ and adding it to the other. We can
assume without loss of generality that a fixed point of $p$ is being moved to
$q$. So the longest decreasing subsequence of $p'$ is not longer than that
of $p$, and so, not longer than $k$
 since $p'$ is a substring of $p$. There remains to show that the
longest decreasing subsequence of $q'$ is also not longer than $k$. 

As $q'$ differs from  $q$ only by the insertion of the fixed point $M=M(p,q)$,
 the only way $q'$ could possibly have a
decreasing subsequence longer than $k$ would be when $q$ itself has
a decreasing subsequence of length $k=2m+1$. In that case, by
Lemma \ref{fixedpoint}, all maximum-length decreasing subsequences of $q$
contain a fixed point. So when $M$ is inserted into $q$, and $q'$ is formed,
$M$ cannot extend any of the maximum-length decreasing subsequences of
 $q$ because that would mean that {\em two} fixed points are part of the 
same decreasing subsequence. That is impossible, since fixed points form
increasing subsequences.
  
So indeed, $f_{n,k}$ maps into $B(n,k)$, and our claim is proved. 
\end{proof}

\subsection{The special case $k=3$}

The first special case of Theorem \ref{oddk} is when $k=1$. Then
$x_k(r)=0$ for any $r$, while $y_k(r)=1$ for any $r$. So
(\ref{tobeproved}) simplifies to the well-known binomial-coefficient 
identity
\[0=\sum_{r=0}^{2n} (-1)^r {2n\choose r} .\]

The special case of $k=3$ is more interesting. We point out that
in this case, it is known \cite{gouyou}
 that $y_3(n)=\sum_{i=0}^{\lfloor n/2 \rfloor}
{n\choose 2i} C_i$, where $C_i={2i\choose i}/(n+1)$
 is the $i$th Catalan number.  The numbers $y_3(n)$ are  called
the {\em Motzkin numbers}.

It follows from Theorem \ref{beissinger} that if $v$ is fixed-point free,
then $RS(v)$ has no odd columns. Therefore, $x_{2m+1}(r)=x_{2m}(r)$.
In particular, for $k=3$, Theorem \ref{oddk} simplifies to 
\[\sum_{r=0}^{2n} (-1)^r {2n\choose r} 
y_{3}(r)y_{3}(2n-r) = \sum_{r=0}^{2n} {2n\choose r} x_2(r)x_2(2n-r).\]
Note that $x_2(r)$ is just the number of Standard Young Tableaux
in which each column is of length two (of even length not more than two).
The number of such tableaux is well-known (see for instance Exercise 6.19.ww
of \cite{stanley}) to be the Catalan number
$C_{r/2}$ if $r$ is even, and of course, 0 if $r$ is odd. 
Therefore, the previous displayed equation simplifies to 
\[\sum_{r=0}^{2n} (-1)^r {2n\choose r} 
y_{3}(r)y_{3}(2n-r) =  \sum_{i}  {2n \choose 2i} C_iC_{n-i}.\]

It turns out that the right-hand side is a well-known sequence. It is 
sequence A005568 in \cite{sloane}. In particular, it is proved in \cite{guy},
 that the $n$th element $f_n$ of this sequence has the closed form $f_n=
C_nC_{n+1}$. Furthermore,  it is shown in  
\cite{gouyou} that  $f_n=y_4(2n)$.

So we have proved the following identity.

\begin{corollary}
For all positive integers $n$, we have
\[\sum_{r=0}^{2n} (-1)^r {2n\choose r} 
y_{3}(r)y_{3}(2n-r)=y_4(2n)=C_{n}C_{n+1}=
\frac{{2n\choose n}{2n+2 \choose n+1}}{(n+1)(n+2)}.\]
\end{corollary}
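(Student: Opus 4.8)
The plan is to let the displayed computation immediately preceding the statement carry most of the weight, and to organize the small amount of remaining work into three separable pieces: a reduction to a Catalan convolution, the evaluation of that convolution, and a final identification. First I would invoke Theorem \ref{oddk} with $k=3$ to replace the alternating left-hand sum by the positive convolution $\sum_r {2n\choose r} x_3(r)x_3(2n-r)$. Theorem \ref{beissinger} shows that a fixed-point-free involution has no odd columns in its Robinson--Schensted image, so forbidding decreasing subsequences of length $4$ is no stronger than forbidding them of length $3$; hence $x_3(r)=x_2(r)$. Recognizing $x_2(2i)$ as the number of standard Young tableaux whose columns all have length two, namely $C_i$, and substituting $r=2i$, rewrites the whole quantity as $\sum_i {2n\choose 2i} C_iC_{n-i}$.

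The central, and hardest, step is to evaluate this convolution in closed form and show it equals $C_nC_{n+1}$. This is the only genuinely non-elementary point in the argument, and I expect it to be the main obstacle. I would handle it by recognizing the sum as the integer sequence A005568 and citing the known evaluation \cite{guy}. A self-contained alternative would be to pass to exponential generating functions: since $C_i/(2i)! = 1/\bigl(i!\,(i+1)!\bigr)$, the series $\sum_i C_i x^{2i}/(2i)!$ is a modified-Bessel-type series, and $\sum_i {2n\choose 2i}C_iC_{n-i}$ is the coefficient of $x^{2n}/(2n)!$ in its square. This route pleasingly echoes the Bessel-function computation in Wilf's original proof \cite{wilf}, but I would not grind through the coefficient extraction here.

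The third step identifies this closed form with a tableaux count. Here I would cite the theorem of Gouyou-Beauchamps \cite{gouyou} that the number $y_4(2n)$ of standard Young tableaux on $2n$ boxes with at most four rows is $C_nC_{n+1}$, which supplies the equality with $y_4(2n)$ and so completes the tableaux interpretation of both sides of the original identity. Finally I would finish with the purely algebraic simplification obtained by writing $C_m={2m\choose m}/(m+1)$, which turns $C_nC_{n+1}$ into ${2n\choose n}{2n+2\choose n+1}\big/\bigl((n+1)(n+2)\bigr)$. Assembling these four pieces --- the reduction, the convolution evaluation, the Gouyou-Beauchamps identity, and the final algebra --- yields the stated chain of equalities.
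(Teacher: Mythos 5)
Your proposal follows the paper's own derivation essentially step for step: invoke Theorem \ref{oddk} at $k=3$, use Theorem \ref{beissinger} to reduce $x_3$ to $x_2$, identify $x_2(2i)=C_i$, and cite \cite{guy} and \cite{gouyou} for the evaluation $\sum_i {2n\choose 2i}C_iC_{n-i}=C_nC_{n+1}=y_4(2n)$, finishing with the algebraic rewriting of $C_nC_{n+1}$. (Incidentally, your form of the Catalan convolution, $\sum_i {2n\choose 2i}C_iC_{n-i}$, is the correct one; the extra factor ${n\choose i}$ in the paper's displayed sum is a typo.)
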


\vskip 2 cm

\centerline{{\bf Acknowledgment}}

I am indebted to all three referees of this manuscript for their corrections
and elucidating comments.

\end{document}